\newcommand{\E}[2][]{\ensuremath{\mathbb{E}_{#1} \left[#2 \right]}}
\newcommand{\Prob}[2][]{\ensuremath{\mathbb{P}_{#1} \left(#2 \right)}}
\newcommand{\dd}{\mathrm d}
\newcommand{\N}{\mathbb{N}}
\def\supp{{\rm Supp}}
\providecommand{\mayberemove}[1]{{\color{green}{}}}
\newcommand{\R}{\mathbb{R}}
\newcommand{\mass}[1]{\left\langle m \mathbf{1}_{m \geq 1}, #1 \right\rangle}
\newcommand{\ME}{\mathcal{M}_{+}(E)}
\newcommand\numberthis{\addtocounter{equation}{1}\tag{\theequation}}
\newcommand{\cstone}{c'}
\newcommand{\eps}{\varepsilon}
\newtheorem{thm}{Theorem}[section]
\newtheorem{lemma}[thm]{Lemma}
\newtheorem{cor}[thm]{Corollary}
\newtheorem{assumption}[thm]{Assumption}
\theoremstyle{definition}
\newtheorem{rmq}{Remark}[section]
\newtheorem{example}[rmq]{Example}
\newtheorem{defn}{Definition}[section]
\begin{document}
\title{Gelation in cluster coagulation processes}
\author{L. Andreis\footnote{Politecnico di Milano, Dipartimento di matematica,
Piazza Leonardo da Vinci, 32,
20133 Milano.}, T. Iyer\footnote{Weierstrass Institute for Applied Analysis and Stochastics, Mohrenstrasse 39, 10117 Berlin, Germany.},  E. Magnanini\footnotemark[2].}
\maketitle
\abstract{We consider the problem of gelation in the cluster coagulation model introduced by Norris [\textit{Comm. Math. Phys.}, 209(2):407-435 (2000)], where pairs of clusters of types $(x,y)$ taking values in a measure space $E$, merge to form a new particle of type $z\in E$ according to a transition kernel $K(x,y, \mathrm{d} z)$. 
This model possesses enough generality to accommodate inhomogeneities in the evolution of clusters, including variations in their shape or spatial distribution.
We derive general, sufficient criteria for stochastic gelation in this model. As particular cases, we extend results related to the classical Marcus--Lushnikov coagulation process, showing that reasonable `homogenous' coagulation processes with exponent $\gamma>1$ yield gelation; and also, coagulation processes with kernel $\bar{K}(m,n)~\geq~(m \wedge n) \log{(m \wedge n)}^{3 +\epsilon}$ for $\epsilon>0$.\\
}
\noindent  \bigskip
\\
{\bf Keywords:}  Cluster coagulation, Marcus--Lushnikov process, stochastic gelation, strong gelation, inhomogeneous random graphs. 
\\\\
{\bf AMS Subject Classification 2010:} 60K35, 82C22, 05C90.
%\tableofcontents
\section{Introduction} \label{intro}
Models of coagulation arise widely in many scientific models, in areas ranging from physical chemistry (in the formation of polymers), 
to astrophysics (in modelling the formation of galaxies). A classical model for coagulation involves collections of particles (which we refer to later on, formally, as `clusters'), each attributed with a mass. Pairs of particles coagulate to form new particles at a rate $\bar{K}(x,y)$, 
where $x$ and $y$ are \emph{masses} of the particles. This stochastic coagulation model is known as the \emph{Marcus--Lushnikov process}~\cite{Marcus68, Gil72, Lushnikov78}.
The limiting behaviour of the particle masses as the number of particles tends to infinity in the Marcus--Lushnikov model is generally expected to be encoded by a set of infinitely many differential equations (or measure-valued differential equations) known as the \emph{Smoluchowski} or \emph{Flory} equations. 

Particular cases of the Marcus--Lushnikov model are closely related to other stochastic models: the case $\bar{K}(x,y) = 1$ corresponds to the Kingman's coalescent~\cite{kingman-coal}, the case $\bar{K}(x,y) = x+y$ has multiple interpretations, including being related to Aldous' continuum random tree \cite{aldous-crt1, aldous-pitman-98-tree,  bertoin-eternal}, 
whilst the case $\bar{K}(x,y) = xy$ is closely related to the Erd\H{o}s--R\'{e}nyi random graph (see, for example, \cite{janson-birth-of-the-giant,aldous-mult-entrance, aldous-multiplicative-crit-window, AnKoPa21}). We refer the reader to the review paper~\cite{aldous99} for a more general overview, although remark that there has been a lot of progress made over the last 25 years. We also note that in this paper, we are interested in models of \emph{pure coagulation}; whilst a lot of work in the literature also allows for the \emph{fragmentation} of particles. 

A natural question of interest related to coagulation processes is whether or not at some time $t > 0$ there is the formation of \emph{macroscopic} or \emph{giant} particles (\emph{gels}), i.e. particles whose masses are on a significantly larger scale than the ones of the initial particles in the system. Motivated by the application to polymer chemistry, this is known as \emph{gelation}. Gelation is generally defined as whether a solution of the Smoluchowski (or Flory) equation fails to `conserve mass', which means, intuitively, that mass is lost to `infinite-mass' particles. This is closely linked to the appearance of `large particles' in the Marcus--Lushnikov model as long as one knows that the trajectories of the process concentrate on solution(s) of such an equation. 
This fact was first used by Jeon to prove existence of gelling solutions to the Smoluchowski equation~\cite{jeon98} whenever there exists $\alpha > 1/2$ such that $\bar{K}(x, y) \geq (xy)^{\alpha}$ (see also~\cite{rezakhanlou2013} for a simplified exposition). An alternative proof of gelation using analytic tools was provided by Escobedo, Mischler and Perthame~\cite{escobedo-mischler-perthame-02} (see also~\cite{escobedo-et-al-mass-cons}), whilst Laurencot~\cite{Lau14} improved this to show that gelation occurs whenever $\bar{K}(x,y) \geq \sqrt{xy}(\log(x)\log(y))^{1+\epsilon}$, for $\epsilon > 0$. However, a longstanding scientific conjecture that has not previously been proven rigorously states that if $\bar{K}$ is \emph{homogenous} with exponent $\gamma > 1$ (i.e., $\bar{K}(cx, cy) = c^{\gamma} \bar{K}(x,y)$ for $c > 0$) then gelation occurs (see, e.g. \cite{aldous99, eibeck-wagner-01, wagner-explosive-phenomena}). 

Although an extensive literature is devoted to the classical Marcus--Lushnikov process, relatively less is known about variants of this model incorporating inhomogenieties of particles, for example, their shape, velocity, or location in space. A framework introduced by Norris in~\cite{norris-cluster-coag}, called the \emph{cluster coagulation model}, allows one to incorporate these features in a rather general way. In~\cite{norris-cluster-coag} a weak law of large numbers  has also been proved, recently extended to weaker assumptions in~\cite{AnIyMa24}. See also~\cite{Norris} for a variant that incorporates diffusion of clusters. However, apart from a particular special case of the model~\cite{heydecker2019bilinear}, general criteria for gelation in this model that incorporate information about these inhomogeneities are lacking, despite being of interest from both the perspective of applications in physics, and mathematically. 

It should be noted that whilst this model is rather general, and can incorporate spatial characteristics in the clusters, it lacks an important feature: \emph{movement of particles in space} independent of coagulation events. There are a number of results related to other models incorporating the movement of particles as Brownian motions in space~\cite{hammond-rezakhanlou-07, hammond-rezkhanlou-moment-bounds-07, yaghouti-rezkhanlou-hammond-09, rezakhanlou-14-pointwise-bounds}, or as particles jumping across two sites~\cite{Schultze-wagner-06, wagner-post-gelation}; but we are not aware of any general results concerning criteria for gelation in these models, apart from the interesting~\emph{induced gelation} effect in a particular case of the two-site model~\cite{eibeck-wagner-01}. 
Recently, in~\cite{AnKoLaPa23}, the question of gelation in a spatial coagulation model has been approached using a different approach, with Poisson point processes and large deviations. 

\subsection{Overview on our contribution}
In this paper we make a contribution to the aforementioned gap in the literature by providing a general sufficient criterion for gelation in the cluster coagulation model. 
\begin{enumerate}
\item In Theorem~\ref{thm:simp-gel}, we provide sufficient criteria for gelation in the general setting of the cluster coagulation model. These criteria may be of interest in applied settings, where the kernels may take into account many features of a cluster, not only the mass. 
\item \label{item:class-improve} As a consequence, in Corollary~\ref{cor:classical-gelation} we provide improved criteria for gelation in the classical Marcus--Lushnikov process. We show, as a particular case, that reasonable `homogenous' coagulation processes with exponent $\gamma > 1$ yield gelation, thus, providing, as far as we now, the first rigorous proof of the aforementioned generally accepted scientific principle. 
We also show, that, if for $\epsilon > 0$, $\bar{K}(x,y) \geq (x \wedge y) (\log{(x \wedge y)})^{3 +\epsilon}$, gelation occurs - a condition that depends only on the size of the minimum of merging clusters.  
\end{enumerate}

To prove the gelation statements, we apply techniques introduced by Jeon~\cite{jeon98} (see also~\cite{rezakhanlou2013}), exploiting the Markovian dynamics and the generator of the process to bound the expected value of the random time when a positive fraction of the mass of the system is made up of `large' clusters. In the general spatial setting, we use a coarse-graining procedure, partitioning the space into regions where the interaction rates between clusters have sufficient lower bounds. 
The summability condition in Assumption~\ref{ass:gelation} ensures large clusters form `quickly' enough for gelation to arise. When the state space also incorporates, for example, the position of the clusters, this criterion guarantees gelation for kernels that are products of a function decreasing in the distance between clusters, and of a `gelling' kernel of the masses. 
The improved sufficient criteria for gelation in the non-spatial Marcus-Lushnikov model, outlined in Item~\ref{item:class-improve} above, are a result of finer lower bounds on the generator compared to the bounds in~\cite{jeon98, rezakhanlou2013}. 

The rest of the paper is structured as follows.
\begin{enumerate}
    \item In Section~\ref{intro-subsec-model} we introduce the cluster coagulation model and we define gelation in this context. In Section~\ref{example_CGP} we provide examples of natural models that fit into the framework of cluster coagulation processes.
    \item In Section~\ref{sec:main-results-statements} we state our results. In particular, in Section~\ref{sec:suff_crit} we state our sufficient criteria for gelation, the main theorem of this paper. In Corollary~\ref{cor:classical-gelation} we state how this criteria translate into the setting of classical, non-spatial, coagulation processes, improving the state of the art in this setting.  In Section~\ref{sec:coupling} we mention a connection with inhomogeneous graphs and how to derive some gelation criteria from them. Such a connection is based on a natural coupling, which we omit to prove here (we refer to an extended pre-print version of this work for it~\cite{AnIyMa23gel}). 
    \item Finally, Section~\ref{sec:main-results-proofs} deals with the proofs of the main results. 
\end{enumerate}

\subsection{Definition of the process and gelation}
\label{intro-subsec-model}
We consider the \emph{cluster coagulation process}, introduced by Norris in~\cite{norris-cluster-coag}, 
where \emph{clusters} are characterised by their \emph{type}, an element of a measurable space $(E, \mathcal{B})$. Associated with a cluster of type $x \in E$ is a \emph{mass function} $m: E \rightarrow (0, \infty)$. Another important quantity associated with the process is a \emph{coagulation kernel} $K: E \times E \times \mathcal{B} \rightarrow [0, \infty)$, which satisfies the following:
\begin{enumerate}
    \item for all $A \in \mathcal{B}$ $(x,y) \mapsto K(x,y, A)$ is measurable;
    \item for all $x, y \in E$ $K(x,y, \cdot)$ is a measure on $E$;
    \item \emph{symmetric:} for all $A \in \mathcal{B}, x,y \in E$ $K(x,y, A) = K(y,x, A)$;
    \item $K$ is \emph{finite:} for all $x, y  \in E$ $\bar{K}(x,y) := K(x,y, E) < \infty$;
    \item  $K$ \emph{preserves mass:} for all $x, y \in E$, 
    $m(z) = m(x) + m(y)$ for $K(x,y, \cdot)$-a.a. $z \in E$.
\end{enumerate}

Suppose that we begin with a configuration of clusters labelled by an index set $I$. 
Then, 
\begin{itemize}
    \item to each labelled pair of clusters, independently, with types $x, y \in E$ and $\bar{K}(x,y) > 0$ we associate an exponential clock (exponential random variable) with parameter $\bar{K}(x,y)$;
    \item upon the elapsure of the next exponential random variable in the process, corresponding to a labelled pair with types $x,y \in E$, say, the associated clusters are removed and replaced with a new labelled cluster with type $z \in E$, where $z$ is sampled from the probability measure 
    \begin{equation}\label{measz}
    \frac{K(x, y, \cdot)}{\bar{K}(x, y)}.
    \end{equation}
\end{itemize}
We refer the reader to Section \ref{example_CGP} for natural examples of coagulation models that fit into this framework.
Figure~\ref{coag_mech} illustrates the coagulation mechanism.
\begin{figure}[H]
\centering
\includegraphics[width=0.8\linewidth]{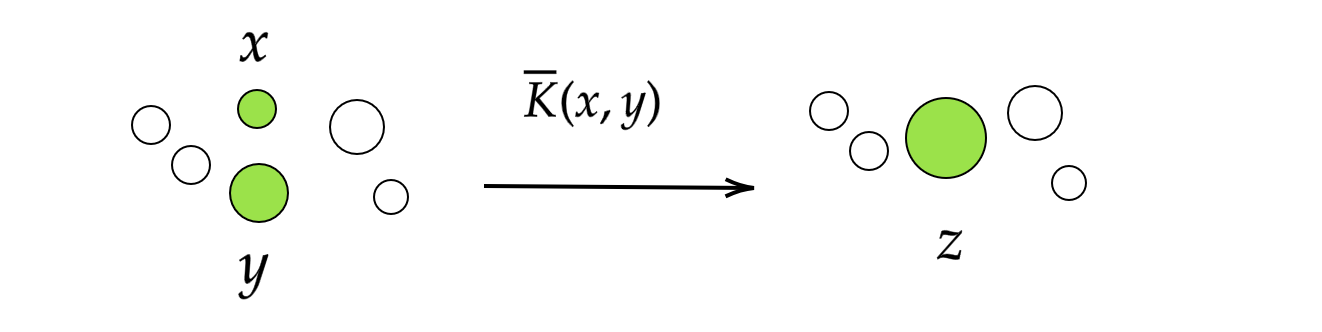}
 \caption{Two clusters of types $x$ and $y$ merge at rate $\bar{K}(x,y)$ into a particle of type $z$, sampled from the probability measure $K(x,y, \dd z)/\bar{K}(x,y)$, with mass $m(z)=m(x)+m(y)$. }  \label{coag_mech}
 \end{figure}

Throughout this paper we consider sequences of cluster coagulation processes, depending on a parameter $N \in \mathbb{N}$, that is for each process of the sequence we have a coagulation kernel that depends on the parameter (we denote it with $K_N$). One may consider this parameter $N$, up to random fluctuations, as the total \emph{initial mass} of the system. We analyse the process in limiting regimes as $N \to \infty$. For each $N \in \mathbb{N}$, we consider the process as a measure valued Markov process $(\mathbf{L}^{(N)}_{t})_{t \geq 0}$ on the space of \emph{finite point measures} on $E$, i.e., the space of positive, integer valued,  finite measures on $E$. The point measure $\mathbf{L}^{(N)}_{t}$ encodes the configuration of clusters at time $t$, so that for $A \subseteq E$, $a \in (0, \infty)$, $\mathbf{L}^{(N)}_{t}(A \cap m^{-1}([a, \infty)))$
denotes the random number of clusters of mass at least $a$ belonging to $A$ at time $t$. Suppose $\mathcal{M}_{+}(E)$ denotes the set of finite, positive measures on $E$.
Then, the infinitesimal generator $\mathcal{A}_{N}$ associated with the process is defined as follows: for any bounded measurable test function $F:\mathcal{M}_{+}(E) \rightarrow \mathbb{R}$, we have
\begin{equation} \label{eq:gen-def}
\mathcal{A}_{N} F(\xi)=  \frac{1}{2}\int_{E\times E\times E} \xi(\dd x) \left(\xi - \delta_{x}\right)(\dd y) K_{N}(x,y,\dd z) \left(F({\xi}^{(x,y)\to z}) - F(\xi)\right) \qquad \forall \xi\in \mathcal{M}_{+}(E),
\end{equation}
where ${\xi}^{(x,y)\to z} := \xi + \left(\delta_{z} -\delta_{x} - \delta_{y} \right)$. Note that, the measure $\xi$ in~\eqref{eq:gen-def} is always non-negative, since, we always assume the process is initiated by a finite point measure. Thus, for any $x \in E$, by assumption $\xi(\{x\}) \in \mathbb{N}_{0}$ is a non-negative integer, representing the number of clusters of type $x$. The measure ${\xi}^{(x,y)\to z}$, therefore, describes the configuration of the system after a coagulation involving the two clusters $x,y\in E$ merging to form a cluster $z$ with $m(z)= m(x) +m(y)$, for $K_{N}(x,y, \cdot)$-a.a. $z \in E$. 
\begin{rmq}
Abusing notation for brevity, if we write $\xi(\dd x)$ for $\xi(\{x\})$, the factor $\frac{1}{2}$ in front of the generator in~\eqref{eq:gen-def} ensures that the number of pairs that interact at rate $\bar{K}_{N}(x,y)$ is $\xi(\dd x)\xi(\dd y)$ if $x \neq y$ and $\binom{\xi(\dd x)(\xi(\dd x)-1)}{2}$ if $x=y$.  
\end{rmq}
Whilst we allow the kernel $K_{N}$ to depend on $N$, we are often motivated by scenarios where $K_{N} \equiv K$ for some fixed kernel $K$. 
In such cases, it is well known that, in order to observe non-trivial limiting behaviour of the process (for example in proving a law of large numbers) a time rescaling is needed. This allows one to counterbalance the increase in the number of interactions as the initial mass of clusters grows with $N$.
This motivates the definition of the following normalised process: 
\[
\bar{\mathbf{L}}^{(N)}_{t} := \mathbf{L}^{(N)}_{t/N}/N.
 \]
Note that the normalised process is still a Markov process on $\mathcal{M}_{+}(E)$ with generator $\tilde{\mathcal{A}}_{N}$, defined such that for $\tilde{\xi} \in \mathcal{M}_{+}(E)$ and any bounded measurable test function $F:\ME \rightarrow \mathbb{R}$, we have
\begin{linenomath}
\begin{align} \label{eq:gen-def-2}
\tilde{\mathcal{A}}_{N}F(\tilde{\xi}) = \frac{N}{2}\int_{E\times E\times E} \tilde{\xi}(\dd x) \left(\tilde{\xi} - \frac{\delta_{x}}{N}\right)(\dd y) K_N(x,y,\dd z)\left(F\left(\tilde{\xi} + \frac{\left(\delta_{z} -\delta_{x} - \delta_{y} \right)}{N}\right) - F(\tilde{\xi})\right).
\end{align}
\end{linenomath}
The scaling by $N$ comes from the following. First, to get the total rate at which clusters associated with $x$ and $y$ interact, we need to multiply each of the terms $\tilde{\xi}(\dd x)$ and $\left(\tilde{\xi} - \frac{\delta_{x}}{N}\right)(\dd y)$ by a factor of $N$. Then, note that the time re-scaling $t \mapsto t/N$ corresponds to `slowing down' each of the exponential clocks in the system by a factor of $N$. If $X$ is exponentially distributed with rate $K$, $N X$ is exponentially distributed with rate $K/N$. Therefore, we need to normalise the kernel by $K_{N} \mapsto K_{N}/N$. Combining these steps leads to the above. 
 Given a measure $\mu \in \mathcal{M}_{+}(E)$ and a measurable function $f: E \rightarrow \mathbb{R}$, we denote by \[\langle f, \mu \rangle := \int_{E} f(x) \mu(\dd x).\] 
At the level of the stochastic process, we denote by $\Prob[N]{\cdot}$ and $\E[N]{\cdot}$ probability distributions and expectations with regards to the trajectories of the process $(\bar{\mathbf{L}}^{(N)}_{t})_{t \geq 0}$ with generator $\tilde{\mathcal{A}}_{N}$ and possibly random initial condition $\bar{\mathbf{L}}^{(N)}_{0}$. To ensure the process is well defined, we assume throughout that $N \bar{\mathbf{L}}^{(N)}_{0}$ is, almost surely, a point measure. 
In addition, we introduce the following notation for the regular conditional distribution and expectations when the initial condition $\bar{\mathbf{L}}^{(N)}_{0}$ is given by a (deterministic) measure $\boldsymbol{\pi}^{(N)}\in \mathcal{M}_{+}(E)$ 
\begin{equation}\label{eq:prob_pi_N}
\Prob[N, \, \boldsymbol{\pi}^{(N)}]{\cdot} := \Prob[N]{\cdot \, | \, \bar{\mathbf{L}}^{(N)}_{0} = \boldsymbol{\pi}^{(N)}} \quad \text{and} \quad  \E[N, \,\boldsymbol{\pi}^{(N)}]{\cdot} := \E[N]{\cdot \, | \, \bar{\mathbf{L}}^{(N)}_{0} = \boldsymbol{\pi}^{(N)}}.
\end{equation}
\begin{rmq} \label{rem:finite-point-measure}
As with $\bar{\mathbf{L}}^{(N)}_{0}$, we always only consider initial configurations $\boldsymbol{\pi}^{(N)}$ such that $N \boldsymbol{\pi}^{(N)}$ is a finite point measure. 
\end{rmq}
For brevity, we generally refer to sequences of cluster coagulation processes $\big\{(\bar{\mathbf{L}}^{(N)}_t)_{t \geq 0}\big\}_{N \in \mathbb{N}}$ as a cluster coagulation process. 

Our main interest is the presence of \emph{gelation} in the cluster coagulation process, indicating the emergence of `large' clusters. The following is a minor reformulation of the definitions from Jeon~\cite[Definition~2]{jeon98}. First, it is helpful to define random times indicating the presence of these large cluster. Let $\psi: \mathbb{N} \rightarrow \mathbb{N}$ be given such that $\lim_{N\to\infty}\psi(N)=+\infty$, and let $\delta \in (0,1)$. For each $N \in \mathbb{N}$ we define the $(\psi,\delta)$\emph{-gelation time} $\tau_{N}(\psi, \delta)$ by
   \begin{equation} \label{eq:giant-time-coagulation}
 \tau_{N}(\psi, \delta) := \inf \left\{t \geq 0: \left\langle m \mathbf{1}_{m \geq \psi(N)}, \bar{\mathbf{L}}^{(N)}_t\right\rangle \geq  \delta\right\}.\end{equation}
\begin{defn}[Stochastic gelation and strong gelation] \label{defn:stoch-gelation}
Take a cluster coagulation process \newline $\{(\bar{\mathbf{L}}^{(N)}_t)_{t \geq 0}\}_{N\in\mathbb{N}}$:
\begin{enumerate}
    \item for a function $\psi: \mathbb{N} \rightarrow \mathbb{N}$ such that $\lim_{N\to\infty}\psi(N)=+\infty$ and $\delta > 0$, the \newline\emph{$(\psi, \delta)$-stochastic gelation time} of the cluster coagulation process $(\bar{\mathbf{L}}^{(N)})_{N \in \mathbb{N}}$ is defined by 
\begin{equation}\label{stoc_gtime}
T^{\psi, \delta}_{g} := \inf\left\{t \geq 0: \limsup_{N \to \infty} \Prob[N]{\tau_{N}(\psi, \delta) \leq t} > 0 \right\};
\end{equation} 
    \item we define the \emph{strong gelation time} of the cluster coagulation process by \[\inf\{t > 0: \exists \, 0 < \alpha, \delta \leq 1 \text{ such that } \limsup_{N \to \infty} \Prob[N]{\tau_{N}(\alpha N, \delta) \leq t} > 0\}.\]  
\end{enumerate}
If, for some $\psi: \mathbb{N} \rightarrow \mathbb{N}$ such that $\lim_{N\to\infty}\psi(N)=+\infty$ and $\delta > 0$ we have $T^{\psi, \delta}_{g} < \infty$ we say that \emph{stochastic gelation occurs}. If the strong gelation time is finite, we say that \emph{strong gelation occurs}.
\end{defn}

\begin{rmq}
    Gelation is also often defined in terms of deterministic trajectories. More precisely, under suitable assumption of convergence of the sequence of kernels $\{K_N\}_N$, one expects the trajectories of $(\bar{\mathbf{L}}^{(N)}_{t})_{t \geq 0}$ to concentrate on solutions of the so called `Smoluchowski' equation as $N \to \infty$, or better to its modification that includes the influence of gel, the so called `Flory' equation, see~\cite{norris-cluster-coag, AnIyMa24}. Suppose $(\bar{\mathbf{L}}^{*}_t)_{t \geq 0}$ is a trajectory solving this equation, with $\left\langle m, \bar{\mathbf{L}}^{*}_{0} \right\rangle < \infty$.  One says $(\bar{\mathbf{L}}^{*}_t)_{t \geq 0}$ is a `gelling solution' if for some $t, \eps > 0$, $\left\langle m, \bar{\mathbf{L}}^{*}_t \right\rangle \leq \left\langle m, \bar{\mathbf{L}}^{*}_{0} \right\rangle - \eps$. An equivalence principle by Jeon, which one may generalise to the cluster coagulation setting, shows that whenever $(\bar{\mathbf{L}}^{(N)}_{t})_{t \geq 0}$ displays this concentration, stochastic gelation is equivalent to the existence of gelling solutions to the associated equation (see~\cite[Theorem~5]{jeon98}).
\end{rmq}

\subsection{Examples of cluster coagulation processes} \label{example_CGP}
The cluster coagulation process is general enough to encompass a large number of examples, depending on particular choices of the space $E$. In the following examples we fix $K_{N} \equiv K$ for some kernel $K$. 
\begin{example}[Classical kernel] \label{ex:classical}
    If $E = (0, \infty)$, $K(x, y, \dd z) =\bar{K}(x,y) \delta_{x + y}$, for a continuous symmetric function $\bar{K}(x,y)$, and the mass function $m(x) \equiv x$, the above process corresponds to the classical Marcus--Lushnikov process.
\end{example}

\begin{example}[Historical Marcus--Lushnikov processes]
    One may extend $E$ to incorporate not just the masses of clusters, but their histories. Indeed, we can take $E$ to be 
    a space where clusters $x$ encode %
    not only their mass, but the history of coagulations (a binary tree embedded in time) leading to the formation of that particle (see~\cite{jacquot-10} for more details). For these processes, Jacquot in~\cite{jacquot-10} proved a weak law of large numbers for the trajectories $(\bar{\mathbf{L}}^{(N)}_{t})_{t\in [0,T)}$ when the kernel is a function only of the associated masses and it is bounded from above by a product of sublinear functions. 
\end{example}
\begin{example}[Toy spatial coagulation models] \label{ex:toy-spatial-coag-1}
    A large number of toy models that incorporate information about the locations of clusters in `space' fall into this framework. For example, we may take $E=\mathcal{S}\times (0, \infty)$ where $\mathcal{S}\subseteq \mathbb{R}^{d}$; in this case an element $x$ of $E$ coincides with a pair $(p,n)$, $p\in\mathcal{S}, n\in (0, \infty)$ and we interpret $p$ as the \emph{location} of a cluster, and $n := m(p, n)$ as its mass. We may, then, assume that after a coagulation between clusters $x = (p,n), y = (s, o)$, the new cluster is placed at a new location, given by a measurable function of the original clusters, for example, the \emph{centre of mass} $\frac{np + os}{n+o}$.  Thus, in this case $K((p,n), (s, o), \cdot) = \bar{K}((p,n), (s, o)) \delta_{\frac{np + os}{n+o}, n+o}$. Another alternative would be a model in which the new particle occupies one of the locations of the previous clusters with probability proportional to their mass, so that 
    $K((p,n), (s, o), \cdot) =\bar{K}((p,n), (s, o))\left(\frac{n}{n+o} \delta_{p, n+o} + \frac{o}{n+o} \delta_{s, n+o}\right)$ (this is the way in which  the `collision operator' is defined in the model of coagulating Brownian clusters of~\cite{hammond-rezakhanlou-07}).
\end{example}

\begin{example}[Bilinear coagulation processes] \label{ex:bilinear}
    In the case that $E = [0, \infty)^{d}$, $A \in [0, \infty)^{d \times d}$ is a symmetric matrix with non-negative entries and $K(x, y, \dd z) = (x^{T} A y) \delta_{x+ y}$, this model corresponds to the bilinear coagulation model studied in~\cite{heydecker2019bilinear}. In that paper, the authors prove a weak law of large numbers for the particle system, showing that the trajectories converge to the unique solution of the Flory equation, and characterise explicitly the `gelling time', 
    by using comparisons between this process and \emph{inhomogeneous random graph processes}.\footnote{Actually, the model studied in~\cite{heydecker2019bilinear} is slightly more general, in that clusters $x$ belong to a metric space $S$, and $\bar{K}(x,y) = \pi(x)^{T} A \pi(y)$, where $\pi: \mathcal{S} \rightarrow \R^{d}$ is a continuous function. Clusters $x$ may also change values according to a kernel $J$ on $S$, in such a way that $\pi(x)$ is preserved.} 
\end{example}

\section{Gelation in the coagulation process}\label{sec:main-results-statements}
In this section, we state general sufficient conditions for stochastic gelation in the cluster coagulation model. As this model is rather general, the conditions required are more technical than conditions for the classical Marcus--Lushnikov process. The main motivation for these results is that, in applications to non-equilibrium processes inhomogenieties in the space $E$ (corresponding to, for example, locations in space, the `types' of cluster, or their velocities) may play a major role in whether or not gelation occurs. 

\subsection{Main result: sufficient criteria for stochastic gelation}\label{sec:suff_crit}
In Assumption~\ref{ass:gelation}, we incorporate the inhomogeneities into the gelation criterion via a coarse-graining procedure. In other words, we assume that, for each $N$, we can `partition' the space $E$ in such a manner that we have sufficient lower bounds on the rate at which clusters belonging to a common partition interact. For example, if $E$ is a metric space, we might partition the space into balls of a fixed radius, thus grouping together clusters that are `close', or of a similar `type'. Note that, in our assumptions, these partitions are allowed to grow with $N$, allowing for a `finer' approximation as $N$ is made larger.

The techniques we use extend those previously developed for the Marcus--Lushnikov process by Jeon~\cite{jeon98} (see also Rezakhanlou~\cite{rezakhanlou2013}).
\begin{assumption}[Assumptions on the kernel] \label{ass:gelation}
Suppose that $\{K_N(x,y, \dd z)\}_{N\in \mathbb{N}}$ is a sequence of kernels associated with a cluster coagulation process. 
For functions $\xi, \psi: \mathbb{N} \rightarrow \mathbb{N}$, with $\lim_{N \to \infty} \psi(N) = \infty$ we assume the following. 
\begin{enumerate}
    \item \label{item:partition_lower_bound} For each $j \leq \log_2(\psi(N))$ there exists a partition $\mathscr{P}^{(j)}_N$ of the set $m^{-1}([2^{j}, 2^{j+1})) \subseteq E$
    such that $\left|\mathscr{P}^{(j)}_N\right| \leq \xi(N)$. Moreover, with $\cstone_N(P,j):=\inf\{\bar{K}_{N}(x,y): x,y\in P\in \mathscr{P}^{(j)}_N\}$, we have
    \begin{equation} \label{eq:ass-1-gel}
     \cstone_N(P,j) > 0 \qquad \forall P\in \mathscr{P}^{(j)}_N. 
    \end{equation}

   \item\label{item:condition_diagonal} 
   There exists a sequence $(f_{j})_{j \in \mathbb{N}_{0}} \in (0, \infty)^{\mathbb{N}_{0}}$ such that $\sum_{j=0}^{\infty}f_{j}<\infty$, and
    \begin{equation} \label{eq:summability}
    \limsup_{N \to \infty} \sum_{j=0}^{\lceil\log_2(\psi(N))\rceil} \frac{2^j \phi_{N}(j)}{f_j^2} < \infty,
    \end{equation}
    where $\phi_{N}(j) := \sum_{P \in \mathscr{P}_{N}^{(j)}}\frac{1}{\cstone_N(P,j)} $.
    \item \label{item:condition_pidgeon} We have $\lim_{N \to \infty} \frac{\xi(N)}{N} = 0$.
\end{enumerate}
\end{assumption}
\begin{rmq}
The intuition behind Assumption~\ref{ass:gelation} can be summarised as follows.
\begin{enumerate}
\item Item~\ref{item:partition_lower_bound} guarantees that we can partition the space $E$ in such a way that a pair of clusters has strictly positive rate of interaction if the two clusters are close enough and have `comparable masses'.
\item
Item~\ref{item:condition_diagonal}, ensures that the lower bounds in~\eqref{eq:ass-1-gel} are sufficiently large to render the quantity in \eqref{eq:summability} finite. This is a technical assumption that we need to guarantee that the expectation of $\tau_{N}(\psi,\delta)$
is bounded from above by a finite value.
\item Item~\ref{item:condition_pidgeon} ensures the partitions are not too `fine'. Under this condition, the pigeonhole principle can be used to show that there are enough clusters in each element of the partition to guarantee that the process does not get `stuck'. 
\end{enumerate}
\end{rmq}

\begin{rmq}\label{equiv}
In Item~\ref{item:condition_diagonal} of Assumption~\ref{ass:gelation}, if the choices of partitions can be made in a manner independent of $N$, so that $\phi_{N}(\cdot) \equiv \phi(\cdot)$, then~\eqref{eq:summability} reduces to showing that, for some sequence $(f_{j})_{j \in \mathbb{N}_{0}} \in (0, \infty)^{\mathbb{N}_{0}}$ with $\sum_{j=0}^{\infty} f_{j} < \infty$, we have 
$\sum_{j=0}^{\infty} \frac{2^{j} \phi(j)}{f_{j}^2} < \infty$. 

This may be formulated in a more elegant way: there exists a sequence of positive values $\{f_i\}_{i\in\N}$ such that $\sum_{i\in\N}f_i<\infty$ and   
 $\sum_{i\in\N} 2^j\phi(j)/f_i^2  < \infty$, 
   if and only if
   $\sum_{i\in\N}(2^j\phi(j))^{1/3}<\infty.$  Indeed, on the one hand, if $\sum_{i\in \N} (2^j\phi(j))^{1/3}<\infty$, then it is enough to choose $f_j=(2^j\phi(j))^{1/3}$.
    For the other implication, by H\"{o}lder's inequality we see that $\sum_{j\in\N}(2^j\phi(j))^{1/3} \leq \left(\sum_{j\in \N}\frac{(2^j\phi(j))}{f^2_j}\right)^{\frac{1}{3}}\left(\sum_{i\in\N}f_i\right)^{\frac{2}{3}}$.
\end{rmq}
In the following theorem, recall that by Remark~\ref{rem:finite-point-measure} we only consider initial configurations $\boldsymbol{\pi}^{(N)}$ that are finite point processes.  Moreover, recall Definition~\ref{defn:stoch-gelation} and the definition of $\tau_N$ from~\eqref{eq:giant-time-coagulation}. 

\begin{thm} \label{thm:simp-gel}
 Suppose that $\left\{(\bar{\mathbf{L}}^{(N)}_t)_{t \geq 0}\right\}_{N \in \mathbb{N}}$ is a cluster coagulation process satisfying Assumption~\ref{ass:gelation}.
\begin{enumerate}
    \item \label{item:thme-simp-gel-1} 
    Suppose that $\{\boldsymbol{\pi}^{(N)}\}_{N \in \mathbb{N}}$ is a sequence of deterministic initial conditions such that, there exists $\eps > 0$ and  $\rho_{0} \in (0,1]$, such that for all $N \in \mathbb{N}$, $\mass{\boldsymbol{\pi}^{(N)}} > \eps$, and 
    \begin{equation} \label{eq:initial-condition}
    \frac{\left\langle m \mathbf{1}_{m \geq 1}, \boldsymbol{\pi}^{(N)} \right\rangle}{\mass{\boldsymbol{\pi}^{(N)}}} \geq \rho_{0}.
    \end{equation} 
    Then, there exists a function $\psi'$ 
    with $\lim_{N \to \infty} \psi'(N) = \infty$, such that, for any $\delta \in (0, \rho_{0}\eps)$ 
    \begin{equation} \label{eq:thm-main}
    \limsup_{N \to \infty} \E[N,\boldsymbol{\pi}^{(N)}]{\tau_N(\psi'(N),\delta)} < C,
    \end{equation} 
    for a  constant $C$, independent of $\boldsymbol{\pi}^{(N)}$ and $\delta$.  Moreover, if $\lim_{N \to \infty} \mass{\boldsymbol{\pi}^{(N)}} = \infty$, then \[\lim_{N \to \infty} \E[N,\boldsymbol{\pi}^{(N)}]{\tau_N(\psi'(N),\delta)} = 0.\]
\item \label{item:thme-simp-gel-2} 
Suppose that $\{\bar{\mathbf{L}}^{(N)}_{0}\}_{N \in \mathbb{N}}$ is sequence of random initial conditions such that 
for some $\eps>0$ and $\rho_0\in (0,1]$
\begin{equation} \label{eq:liminf-prob-gel}
\limsup_{N \to \infty} \Prob[N]{\mass{\bar{\mathbf{L}}^{(N)}_{0}} > \eps, \frac{\left\langle m \mathbf{1}_{m \geq 1}, \bar{\mathbf{L}}^{(N)}_{0} \right\rangle}{\mass{\bar{\mathbf{L}}^{(N)}_{0}}} \geq \rho_{0}} > 0. 
\end{equation}
Then stochastic gelation occurs in the process. 
\end{enumerate}
\end{thm}
\begin{rmq} \label{rem:mass-bound-below}
    In Theorem~\ref{thm:simp-gel}, the condition on $\mass{\boldsymbol{\pi}^{(N)}}$ ensures that there is enough mass bounded from below to form a gel. We can also prove stochastic gelation when the indicators $\mathbf{1}_{m \geq 1}$ may be replaced by any $\mathbf{1}_{m \geq c}$ for any $c > 0$, as long as Assumption~\ref{ass:gelation} is satisfied for the re-scaled function $\tilde{m} := m/c$ instead of $m$. Indeed, gelation with the re-scaled mass function $\tilde{m}$ also implies gelation with the original mass function $m$.
\end{rmq}

\begin{rmq}
In Theorem~\ref{thm:simp-gel}, we can think of $N$ as describing the size of the system. 
Item~\ref{item:thme-simp-gel-1} is a quenched result, holding for a sequence of deterministic initial conditions $\{\boldsymbol{\pi}^{(N)}\}_{N \in \mathbb{N}}$, whilst 
Item~\ref{item:thme-simp-gel-2} provides criteria for stochastic gelation given a sequence of random initial conditions. 
\end{rmq}
\begin{rmq}
 The second part of Item~\ref{item:thme-simp-gel-1} in Theorem~\ref{thm:simp-gel} proves that when $\lim_{N \to \infty} \mass{\boldsymbol{\pi}^{(N)}} = \infty$, and Assumptions~\ref{ass:gelation} is satisfied, we have the so called instantaneous gelation.  
\end{rmq}

The following corollary applies these conditions to the classical Marcus--Lushnikov process, i.e., the setting of Example~\ref{ex:classical}. In particular, it provides criteria for stochastic gelation that improves those appearing in~\cite[Corollary~1]{jeon98},~\cite[Theorem~1.3]{rezakhanlou2013} and~\cite[Proposition 3.15]{Lau14}. In this context, fix a kernel $K_{N} \equiv K$ (removing dependence on $N$). %and assume the initial condition, 
\begin{cor} \label{cor:classical-gelation} 
Let $\big\{(\bar{\mathbf{L}}^{(N)}_t)_{t \geq 0}\big\}_{N \in \mathbb{N}}$ be a sequence of Marcus-Lushnikov processes with kernel $K$, i.e., we are in the setting of Example~\ref{ex:classical}. 
Suppose that $\exists\,\eps>0$ and $\rho_0\in (0,1]$ such that~\eqref{eq:liminf-prob-gel} is satisfied for the sequence of initial conditions $\big\{\bar{\mathbf{L}}^{(N)}_0\}_{N \in \mathbb{N}}$.   
Moreover, with $\cstone(j):= \inf_{x,y \in [2^{j}, 2^{j+1})}\bar{K}(x,y)>0$, for all $j \in \mathbb{N}_{0}$, suppose that 
\begin{equation}\label{eq:class-gel-sum-cond}
    \sum_{j=0}^{\infty} \left(\frac{2^{j}}{\cstone(j)} \right)^{\frac 13}< \infty.
\end{equation}
Then stochastic gelation occurs.
     In particular, when ~\eqref{eq:liminf-prob-gel} is satisfied, stochastic gelation occurs if 
     one of the following two assumptions is satisfied by the kernel $\bar{K}$:
    \begin{enumerate}
        \item \label{item:no_space1} we have $\inf_{u\in [1, 2]} \bar{K}(1, u) > 0$ and for all $x, y > 0$ and all $c>0$,  $\bar{K}(cx, cy) = c^{\gamma} \bar{K}(x, y)$, with $\gamma > 1$;
        \item \label{item:no_space2}
        there exists $\epsilon > 0$ such that, for all $x, y\geq 1$, $\bar{K}(x, y) \geq 1+ (x \wedge y) \log{(x \wedge y)}^{3 + \epsilon}$.
    \end{enumerate}
\end{cor}
In the space-homogeneous setting of Corollary~\ref{cor:classical-gelation}, we have the following remarks.

\begin{rmq}
    Intriguingly, Equation~\eqref{eq:class-gel-sum-cond} shows that gelation may occur for kernels where $\bar{K}(x,y) = 0$ whenever $y/x > 2$ (or $y/x <1/2$). This criterion shows that interactions between particles of `similar size' is enough to ensure gelation, as long as it is strong enough (hence the condition on its growth).  
\end{rmq}

\begin{rmq}
    In view of rmq~\ref{rem:mass-bound-below}, if~\eqref{eq:liminf-prob-gel} is satisfied with the indicators $\mathbf{1}_{m \geq 1}$ replaced by any $\mathbf{1}_{m \geq c}$ for some $c > 0$, and~\eqref{eq:class-gel-sum-cond} is satisfied with $\cstone(j):= \inf_{x,y \in [c2^{j}, c2^{j+1})}\bar{K}(x,y)$, then we can also show that stochastic gelation occurs. 
\end{rmq}
\begin{rmq} \label{rem_cor} We can more precisely quantify the function $\psi$ involved in the $(\psi,\delta)$-stochastic gelation time (i.e., the function describing the sizes of the large particles contributing to gelation - see Definition~\ref{defn:stoch-gelation}). In particular we can show that $T^{\psi,\delta}_{g}<\infty$ for any $\psi(N)\leq N^{b}$, and  any $b\in(0,1)$. See also Example~\ref{ex_final}. \end{rmq}
\subsection{Gelation criteria via coupling with inhomogeneous random graphs} \label{sec:coupling}
Here we fix $K_{N} \equiv K$ for some kernel $K$.
It is well-known that for the classical multiplicative kernel $\bar K(x,y) = m(x) m(y)$, if the process starts with $N$ particles of mass $1$ at time $t=0$, then  the cluster masses at time $t > 0$ are in one-to-one correspondence with the sizes of components of a continuous time analogue of the Erd\H{o}s--R\'enyi random graph, where edges appear between pairs of vertices at rate $1/N$. Likewise, the work by Patterson and Heydecker~\cite{heydecker2019bilinear} shows that for cluster coagulation models of any `bilinear' type (see Section~\ref{example_CGP}), the cluster masses at any time $t$ are in one-to-one correspondence with the component sizes of an \emph{inhomogenous random graph}. Thus, in regimes where a `giant component' arises in the inhomogeneous random graph model, gelation occurs in the coagulation process, with an explicit description of the gelation time.

Theorem~\ref{gel_uppb} below generalises these results. 
First, we define some terminology required to make a connection with inhomogeneous random graphs (using the established results from~\cite{inhom-rand-graph-giant}). We call a coagulation kernel $K$ \emph{graph dominating} if
\[
\text{for all $x,y,q \in E$, $K(x,y,\cdot)$-a.a. $z$} \quad \text{ we have } \quad \bar{K}\left(z, q\right) \geq  \bar{K}(x,q) + \bar{K}(y,q).
\]
If the opposite inequality holds, we say the kernel is \emph{graph dominated}. 

In Theorem~\ref{gel_uppb} we assume the following. First, the initial conditions are \emph{monodispersed}, i.e., $\supp{\big(\mathbf{L}^{(N)}_{0}\big)} \subseteq \{x \in E: m(x) = 1\}$ for all $N\in\mathbb{N}$. Next, assume that there exists a deterministic $\bar{\mathbf{L}}^{*}_{0}\in \mathcal{M}_{+}(E)$ such that 
\begin{equation}\label{conv_IC}
\bar{\mathbf{L}}^{(N)}_{0} \rightarrow \bar{\mathbf{L}}^{*}_{0} \quad \text{weakly, in probability.}
\end{equation}
Moreover, assume that the mass function $m$ is continuous and $E$ is a separable metric space, on which $\bar{\mathbf{L}}^{*}_{0}$ is a Borel probability measure.  
Note that when the initial conditions are monodispersed and~\eqref{conv_IC} holds then $\supp{\big(\bar{\mathbf{L}}^{*}_{0}\big)} \subseteq \{x \in E: m(x) = 1\}$. 
Now, define the following quantities 
\begin{linenomath}
\begin{align} \label{eq:graph-operator-def}
 T_{\bar{K}, \bar{\mathbf{L}}_{0}^{*}}f(x) := \int_{E} f(y)\bar{K}(x, y)\bar{\mathbf{L}}_{0}^{*}(\dd y), \quad \Sigma(\bar{K}, \bar{\mathbf{L}}_{0}^{*}) &:= \sup_{f \in L^{2}(\bar{\mathbf{L}}_{0}^{*}), \|f\|_{L^{2}(\bar{\mathbf{L}}_{0}^{*})} = 1} \left\|T_{\bar{K}, \bar{\mathbf{L}}_{0}^{*}}f\right\|_{L^{2}(\bar{\mathbf{L}}_{0}^{*})}
\\ \text{ and} \quad t^{*} &  := \inf\left\{t > 0: t \Sigma(K, \bar{\mathbf{L}}_{0}^{*}) > 1\right\}.
\end{align}
\end{linenomath}
\begin{thm}\label{gel_uppb}
    Suppose that the above conditions are satisfied. Then the following hold.
    \begin{enumerate}
        \item \label{item:t_g} If $(\bar{\mathbf{L}}^{(N)}_t)_{t\geq 0}$ is graph dominating, then strong gelation occurs, with strong gelation time at most $t^*$. 
        \item If $(\bar{\mathbf{L}}^{(N)}_t)_{t\geq 0}$ is graph dominated then, for any $\psi: \mathbb{N} \rightarrow \mathbb{N}$ non-decreasing with $\lim_{N \to \infty} \psi(N) = \infty$, and for any $\delta > 0$ we have $T^{\psi, \delta}_{g} \geq t^{*}$.
    \end{enumerate}
\end{thm}
\hfill $\blacksquare$

\begin{rmq}
Theorem~\ref{gel_uppb} works by coupling the coagulation process with the following inhomogeneous random graph process. The vertices of the graph are given by the initial clusters, with atoms of $\mathbf{L}^{(N)}_{0}$ determining \emph{types} of the initial set of vertices (all of mass $1$). Then, edges in the graph appear between vertices of type $x, y \in E$ at rate $\bar{K}(x,y)$. The condition of being graph dominating (similarly graph dominated) allows this coupling to be carried out in such a way that a coagulation between clusters precedes the appearance of an edge between associated connected components in the inhomogeneous random graph model. Thus, gelation occurs before the appearance of a giant component in the graph, well-known to emerge at time $t^{*}$. As this coupling argument is quite straightforward, we omit the details in this paper (see, however, version 2 of the extended pre-print~\cite{AnIyMa23gel}). Nevertheless, we believe that examples, such as the following, mean that this monotonicity is important to be aware of in applied contexts.  
\end{rmq}

\begin{example} \label{lem:coupl-admiss} Suppose we are in the same setting as Example~\ref{ex:toy-spatial-coag-1} with clusters moving to their centre of mass. Let $\rho: \mathcal{S} \rightarrow [0, \infty)$ be an even, concave function. Then the coagulation process with kernel $\bar{K}:(\mathcal{S}\times \mathbb{N}_0)^2 \rightarrow [0, \infty)$ defined by $\bar{K}((p, m), (s, n)) := m n\rho(p-s)$
is graph dominating.
\end{example}
\mayberemove{ \begin{example}
Consider the toy spatial coagulation models, introduced above, where $E=\mathcal{S}\times (0, \infty)$. Suppose that we begin with $N$ clusters (of mass $1$, say), sampled i.i.d from the uniform distribution on the hypercube $\mathcal{S}= [0,1]^{d}$, and consider the kernel, 
\[
\bar{K}((p,n), (s, o)) := \begin{cases}
\frac{\kappa_{0}}{(||p-s||)^{\alpha}}, & \text{if } p \neq s \\
0, & \text{otherwise;}
\end{cases}
\] 
with $\kappa_0\in (0,\infty)$ a constant and $\alpha >0$.
Note that if clusters merge at a constant rate $\kappa_0$ (without influence of distance), it is well-known that gelation does not occur, however, in this model, the distance now plays an important role, and one may readily verify that if $\alpha/ d > 1$, stochastic gelation occurs.  
Indeed, one may readily  verify (for example, by using induction), that almost surely, the initial configuration of clusters is such that the distance between any two points is positive throughout the dynamics of the coagulation process, and hence~\eqref{eq:mart-technical-bound} is satisfied almost surely. 
Now, for each $j\in\{0,1,2,\dots,\log_{2}(\psi(N))\}$ we take a partition of $\mathcal{S}$ that consists of $\xi(N)$ hypercubes with side-length $\frac{1}{(\xi(N))^{1/d}}$. Note that $c'(P,j)= \kappa_0\min_{p,s\in P}\frac{1}{(||p-s||)^{\alpha}}\geq \xi(N)^{\alpha/d}$, for all $P$. Fix, for example, $f_{j}:=2^{-j}$; then the sum in \eqref{eq:summability} reads 
\[
\limsup_{N \to \infty} \frac{\xi(N)}{(\xi(N))^{\alpha/d}}\sum_{j=0}^{\log_{2}\psi(N)} 2^{3j} \leq C \limsup_{N \to \infty} {(\xi(N))^{1-\alpha/d}} (\psi(N))^3 = \limsup_{N \to \infty}C\frac{\psi(N)^{3}}{\xi(N)^{\alpha/d -1}},
\]
for some $C\in\mathbb{R}^{+}$. Now if $\frac{\alpha}{d}> 1$ and we set $\psi(N):=\xi(N)^{\frac{\alpha/d -1}{3}}$; we can choose any $\xi(N)$ such that $\lim_{N \to \infty} \xi(N) = \infty$, and that fulfils the third condition in Assumption \ref{ass:gelation}. 
\end{example}
\begin{example}\label{Ex:mult_plus_dist}
Consider again the toy spatial coagulation models, introduced above, where $E=\mathcal{S}\times (0, \infty)$. A natural choice of kernel may be to choose a function that is a product of a non-increasing function of the distance $d$ between clusters (clusters interact more quickly if they are closer together), and a function of their mass. 
In this manner, suppose we choose kernel of product form   $\bar{K}((p,n), (s, o)) := h(d(p,s)) W(n,o)$ where $h: [0, \infty) \rightarrow [0, \infty)$ is non-increasing and non-zero, and $W$ is continuous and  satisfies the conditions under which Corollary \ref{cor:classical-gelation} applies. As $h$ is bounded from below (by $c_{0}$ say) on an interval $[0, \eps)$, for $\eps$ sufficiently small, and $\mathcal{S}$ is compact, we can choose a finite partition $\mathscr{P}$ consisting of open balls of radius $\eps$, such that, if $P \in \mathscr{P}$ for all $x, y \in P$ we have $\bar{K}((p,n), (s, o)) \geq  c_{0} W(n,o)$. By choosing $\mathscr{P}^{(j)} = \mathscr{P}$ for each $j$, one readily verifies (in a similar manner to the proof of Corollary~\ref{cor:classical-gelation}) that all conditions in Assumption \ref{ass:gelation} are verified. Finally, if we begin, for example, with $N||\pi^{(N)}||=N$ clusters of mass $1$, the maximum value of $m(x)$ equals $N$ for all $x\in E$, and we can easily verify condition \eqref{eq:mart-technical-bound} for $\bar{K}$, by setting $g_{\pi}=N \max_{(x,y) \in (\mathcal{S} \times [0,N])^2} \bar{K}(x,y) $.
\end{example}}

\section{Proofs of main results} \label{sec:main-results-proofs}
This section is dedicated to the proofs of Theorem~\ref{thm:simp-gel} and Corollary~\ref{cor:classical-gelation}. 
For the proof of Item~\ref{item:thme-simp-gel-1} of Theorem~\ref{thm:simp-gel} we fix a sequence of initial conditions $\{\boldsymbol{\pi}^{(N)}\}_{N\in\mathbb{N}}$ satisfying the assumptions of the theorem for some $\eps, \rho_0 > 0$, and fix $\delta \in (0, \rho_0\eps)$ (this is the threshold involved in definition~\eqref{eq:giant-time-coagulation}). Recall from~\eqref{eq:prob_pi_N} that $\mathbb{P}_{N, \boldsymbol{\pi}^{(N)}}(\cdot)$ and $\mathbb{E}_{N, \boldsymbol{\pi}^{(N)}}[\cdot]$ denote probabilities and expectations associated with a coagulation process that starts with such an initial condition. 

Since $\delta \in (0, \eps \rho_{0})$ we have $\delta = \eps \rho$, for some $\rho \in (0, \rho_{0})$.
Note that if $(f_{k})_{k \in \mathbb{N}_{0}}$ satisfies the requirement of Item~\ref{item:condition_diagonal} of Assumption~\ref{ass:gelation}, we may re-scale $(f_{k})_{k \in \mathbb{N}_{0}}$ so that $\sum_{i=0}^{\infty} f_{i} = \rho_{0} - \rho>0$. This re-scaling does not affect Assumption~\ref{ass:gelation}. Therefore, with $(f_{i})_{i \in \mathbb{N}_{0}}$ defined in this way, define a strictly decreasing sequence $(\rho_{k})_{k\in\mathbb{N}_{0}}$ as follows: define $\rho_0$ according to~\eqref{eq:initial-condition}, and for $k \in \mathbb{N}$ set
\begin{equation}\label{eq:rho_k}
\rho_k :=\rho+\sum_{i\geq k} f_i,
\end{equation}
so that $\rho_{k} \downarrow \rho>0$.
%%%%%%NEW

We recall that in Theorem~\ref{thm:simp-gel} we assume that there exists a function $\psi(N)$ satisfying Assumption~\ref{ass:gelation}. Ideally, with $\psi'$ being the function appearing in Item~\ref{item:thme-simp-gel-1} of Theorem~\ref{thm:simp-gel}, we would have $\psi' = \psi$. However, from a technicality in the proof of Item~\ref{item:thme-simp-gel-1} of Theorem~\ref{thm:simp-gel} (more specifically to apply Lemma~\ref{lem:easy-2}) we need to define $\psi'$ by 
\begin{equation}\label{kappa}
\psi'(N) := \psi(N) \wedge \kappa(N) \quad \text{where} \quad \kappa(N):=\max\left\{k\in\mathbb{N}: \frac{2 k}{ \min_{0\leq h\leq \log_{2}(k)-1}f_h}\leq \frac{N \mass{\boldsymbol{\pi}^{(N)}}}{\xi(N)}\right\}.
\end{equation}
In this way we ensure that the following inequality is satisfied:
\begin{equation}\label{eq:ass_altern1}
\frac{2 \psi'(N)}{ \min_{0\leq h\leq \log_{2}(\psi'(N))-1}f_h}\leq \frac{N \mass{\boldsymbol{\pi}^{(N)}}}{\xi(N)} \quad \text{for all $N \in \mathbb{N}$.}\end{equation}
The function $\psi'(N)$ satisfies the conditions of Assumption~\ref{ass:gelation} since we have $\lim_{N \to \infty} \psi'(N) = \infty$. Indeed, by definition, 
\begin{equation} \label{eq:kappa-ineq}
\liminf_{N \to \infty} \frac{\kappa(N) + 1}{\min_{0\leq h\leq \log_{2}(\kappa(N) + 1) -1}f_h} >  \liminf_{N \to \infty} \frac{N \mass{\boldsymbol{\pi}^{(N)}}}{2\xi(N)} = \infty,
\end{equation}
where the last equality follows by Item~ \ref{item:condition_pidgeon} of Assumption~\ref{ass:gelation}.
If $\kappa(N)$ were bounded infinitely often, the left-side of~\eqref{eq:kappa-ineq} would also be bounded. Hence $\lim_{N \to \infty} \kappa(N) = \infty$, and since $\lim_{N \to \infty} \psi(N) = \infty$, from \eqref{kappa} we also have $\lim_{N \to \infty} \psi'(N) = \infty$.

Now, in order to prove Theorem~\ref{thm:simp-gel}, we define the family of functions $F_{k}: 
\ME \rightarrow \mathbb{R}^{+}$ such that, for each $\mathbf{L}\in \ME$
\begin{equation} \label{eq:eff-k}
F_{k}(\mathbf{L}) :=\frac{\left\langle m \mathbf{1}_{m \geq 2^{k+1}}, \mathbf{L} \right\rangle}{\mass{\boldsymbol{\pi}^{(N)}}}. 
\end{equation}
We also define an associated family of stopping times $(\mathcal{T}_k)_{k \in \mathbb{N}}$ such that
\begin{equation}\label{stopT}
\mathcal{T}_{k} := \inf\left\{t > 0: \left\langle m \mathbf{1}_{m \geq 2^{i}}, \bar{\mathbf{L}}^{(N)}_{t} \right\rangle /\mass{\boldsymbol{\pi}^{(N)}} \geq \rho_{i} \text{ for all } i= 0, 1, \ldots, k\right\}.
\end{equation}
In words, $\mathcal{T}_{k}$ represents the first time that the total mass of clusters with mass at least $2^{i}$ exceeds $\rho_{i}\mass{\boldsymbol{\pi}^{(N)}}$ for each $i = 0, \ldots, k$. 
Note that the functions $F_k$ and the times $\mathcal{T}_{k} $ depend on $\boldsymbol{\pi}^{(N)}$ and   on $N$, but for brevity of notation, we will exclude this dependence in the remainder of the section. 

\begin{lemma} \label{lem:easy-2}
Suppose that $\vec{v} = (v_1, \ldots, v_n)$, $\vec{c} = (c_1, \ldots, c_n)$ are such that $v_h \in \mathbb{N}$ and $c_{h} > 0$, for all $h=1,\dots,n$. Then, if  $\sum_{h=1}^{n} v_{h} \geq \kappa_1 > n$ and $\sum_{h=1}^{n} \frac{1}{c_h} = \kappa_2$ then
\begin{equation} \label{eq:identity-easy2}
	\sum_{h} c_k \left(v_h^2 - v_h \right) \geq \frac{(\kappa_1- n)^2}{2\kappa_2}. 
\end{equation}
\end{lemma}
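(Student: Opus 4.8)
The plan is to reduce the claim to an elementary pointwise inequality combined with a single application of the Cauchy--Schwarz inequality, the latter being the step that brings in the quantity $k_2 = \sum_i 1/c_i$. Since $v_i \in \mathbb{N}$, each $v_i \geq 1$, and this is exactly what is needed to bound $v_i^2 - v_i$ from below by a perfect square.

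First I would record the pointwise bound
\[
v_i^2 - v_i = \tfrac{1}{2}\bigl(v_i^2 - 1\bigr) + \tfrac{1}{2}(v_i-1)^2 \geq \tfrac{1}{2}(v_i - 1)^2,
\]
valid for every real $v_i \geq 1$, the discarded term $\tfrac12(v_i^2-1) = \tfrac12(v_i-1)(v_i+1)$ being non-negative precisely because $v_i \geq 1$. Multiplying by $c_i > 0$ and summing over $i$ gives
\[
\sum_i c_i\bigl(v_i^2 - v_i\bigr) \geq \tfrac{1}{2}\sum_i c_i (v_i-1)^2.
\]
Next I would apply Cauchy--Schwarz in weighted form, writing $v_i - 1 = \sqrt{c_i}\,(v_i-1)\cdot \tfrac{1}{\sqrt{c_i}}$, to obtain
\[
\Bigl(\sum_i (v_i-1)\Bigr)^2 \leq \Bigl(\sum_i c_i (v_i-1)^2\Bigr)\Bigl(\sum_i \tfrac{1}{c_i}\Bigr) = k_2 \sum_i c_i(v_i-1)^2,
\]
so that $\sum_i c_i(v_i-1)^2 \geq \bigl(\sum_i(v_i-1)\bigr)^2/k_2$. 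Finally, using $\sum_i(v_i - 1) = \sum_i v_i - n \geq k_1 - n > 0$ — here both hypotheses $\sum_i v_i \geq k_1$ and $k_1 > n$ enter, the strict inequality guaranteeing that squaring preserves the bound — I would conclude
\[
\sum_i c_i\bigl(v_i^2 - v_i\bigr) \geq \tfrac{1}{2}\cdot \frac{(k_1-n)^2}{k_2},
\]
which is exactly \eqref{eq:identity-easy2}.

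There is no genuine obstacle here; the only point requiring care is the hypothesis $v_i \in \mathbb{N}$, i.e. $v_i \geq 1$, without which the pointwise inequality $v_i^2 - v_i \geq \tfrac12(v_i-1)^2$ may fail (for instance at $v_i = 0$). Should one wish to allow $v_i = 0$, the same conclusion — in fact with the better constant $1$ in place of $\tfrac12$ — follows by instead applying Cauchy--Schwarz to $g_i = \sqrt{v_i(v_i-1)}$ and using $\sqrt{v_i(v_i-1)} \geq v_i - 1$; in either case the factor $\tfrac12$ in the statement leaves ample slack.
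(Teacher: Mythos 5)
Your proof is correct, and it takes a genuinely different route from the paper's. The paper first discards the terms with $v_i=1$ and uses the integrality bound $v_i^2-v_i\geq \tfrac{1}{2}v_i^2$ on $\{v_i\geq 2\}$, then treats the resulting sum as a constrained optimisation problem: it relaxes the constraints $v_i\geq 2$, $v_i\in\mathbb{N}$ and minimises $\sum_i c_i v_i^2/2$ subject to $\sum_i v_i\geq k_1-n$ by Lagrange multipliers, the stationary point $v_i=-\lambda/c_i$ with $\lambda=(n-k_1)/k_2$ giving exactly the value $(k_1-n)^2/(2k_2)$. You instead use the pointwise decomposition $v_i^2-v_i=\tfrac12(v_i^2-1)+\tfrac12(v_i-1)^2\geq\tfrac12(v_i-1)^2$ (valid for all real $v_i\geq 1$, not just integers) and replace the variational step by a single weighted Cauchy--Schwarz inequality applied to $v_i-1=\sqrt{c_i}(v_i-1)\cdot c_i^{-1/2}$. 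The two are morally equivalent --- minimising a weighted quadratic subject to a linear constraint is precisely what Cauchy--Schwarz encodes --- but your version is tighter as a piece of writing: every step is a one-line verified inequality, whereas the paper's argument leaves implicit that the relaxation only lowers the minimum and that the interior stationary point is indeed the global minimiser (the Hessian remark). What the paper's approach buys in exchange is transparency about where the constant comes from: $(k_1-n)^2/(2k_2)$ is the exact optimum of the relaxed problem, and the method would adapt mechanically to other objectives. Your closing observation is also sound: applying Cauchy--Schwarz directly to $\sqrt{v_i(v_i-1)}$ together with $\sqrt{v_i(v_i-1)}\geq v_i-1$ yields the constant $1$ in place of $\tfrac12$, showing the lemma as stated has slack; since the lemma is only used up to constants in the proof of Theorem~\ref{thm:simp-gel}, this makes no difference downstream.
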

\begin{proof}
Re-writing the left side of Equation~\eqref{eq:identity-easy2} we get
\begin{equation} \label{eq:opt-prob}
\begin{aligned}
	\sum_{h} c_{h} (v_h^2 - v_h)\mathbf{1}_{\{v_{h} > 1\}} \geq \frac{1}{2}\sum_{h}  \frac{(v_{h} \mathbf{1}_{\{v_{h} \geq 2\}})^2}{\sfrac{1}{c_{h}}}\geq \frac{\left(\sum_{h}v_{h}\mathbf{1}_{\{v_{h} \geq 2\}}\right)^2}{2\kappa_2},
\end{aligned}
\end{equation}
where the last inequality is due to the Cauchy--Schwarz inequality. 
Since $\sum_{h} v_{h} \mathbf{1}_{\{v_{h} \geq 2\}} \geq \kappa_1 -n\geq 1$, we deduce the result.

\end{proof}
We now prove Theorem~\ref{thm:simp-gel}.
\subsection{Proof of Item~\ref{item:thme-simp-gel-1} of Theorem~\ref{thm:simp-gel}}
\begin{proof}
We first fix a positive integer $k \leq  \left \lceil \log_2(\psi'(N)) \right \rceil - 1$, where we recall that the function $\psi'$ comes from~\eqref{kappa} (the reason for this choice of $k$ is to be able to apply Item~\ref{item:partition_lower_bound} of Assumption~\ref{ass:gelation} later on).
Note that the function $F_{k}$ defined in \eqref{eq:eff-k} is bounded above by $1$ almost surely on the path of the process, and is clearly measurable. Now, recalling the generator in~\eqref{eq:gen-def-2},  as $F_{k}$ only considers the masses of clusters larger than $2^{k+1}$, ignoring other features, and the mass of the coagulated cluster $z$ is $m(z) = m(x) + m(y)$, we deduce that 
\begin{linenomath}
\begin{align} \label{eq:main-display-1}
 &\tilde{\mathcal{A}}_NF_{k}(\bar{\mathbf{L}}^{(N)}_{t}) =  \frac{1}{2\mass{\boldsymbol{\pi}^{(N)}}} \int_{E\times E} \bar{\mathbf{L}}^{(N)}_{t}(\dd x) \left(\bar{\mathbf{L}}^{(N)}_{t} - \frac 1N\delta_{x}\right)(\dd y) \bar{K}_{N}(x,y)  \\  & \hspace{1cm}\times \left[(m(x)+m(y))\mathbf{1}(m(x) + m(y) \geq 2^{k+1}) - m(x)\mathbf{1}(m(x) \geq 2^{k+1}) - m(y) \mathbf{1}(m(y) \geq 2^{k+1})\right]. 
\end{align}
\end{linenomath}
Now, 
\begin{equation} \label{eq:mart-defn}
M^{(N)}_{F_{k}}(t) := F_{k}(\bar{\mathbf{L}}^{(N)}_{t}) - F_{k}(\boldsymbol{\pi}^{(N)}) - \int_{0}^{t} \tilde{\mathcal{A}}_{N}F_{k}(\bar{\mathbf{L}}^{(N)}_{s}) \dd s
\end{equation}
is a martingale with respect to the natural filtration of the process\footnote{The fact that $M^{(N)}_{F_{k}}(t)$ is a martingale follows from the definition of the infinitesimal generator of the process, see, for example,~\cite[Proposition~7.1.6]{revuzyor}} initiated by the measure $\boldsymbol{\pi}^{(N)}$. Let $I, J > 0$ be given. By Doob's optional sampling theorem (see, for example,~\cite[Corollary~2.3.6 and Theorem~2.3.2]{revuzyor}) applied to the bounded stopping times $I \wedge \mathcal{T}_{k} \leq I \wedge \mathcal{T}_{k+1}$ we have 
\begin{linenomath}
\begin{align}
\E[N, \boldsymbol{\pi}^{(N)}]{F_{k}(\bar{\mathbf{L}}_{I \wedge \mathcal{T}_{k+1}})} & = \E[N, \boldsymbol{\pi}^{(N)}]{F_{k}(\bar{\mathbf{L}}_{I \wedge \mathcal{T}_{k}})} + \E[N, \boldsymbol{\pi}^{(N)}]{\int_{I \wedge \mathcal{T}_{k}}^{I \wedge \mathcal{T}_{k+1}} \tilde{\mathcal{A}}_N F_{k}(\bar{\mathbf{L}}_t) \dd t} \\ & \geq \E[N, \boldsymbol{\pi}^{(N)}]{\int_{I \wedge \mathcal{T}_{k}}^{I \wedge \mathcal{T}_{k+1}} \tilde{\mathcal{A}}_N F_{k}(\bar{\mathbf{L}}_t) \dd t} \geq \E[N, \boldsymbol{\pi}^{(N)}]{\mathbf{1}_{\mathcal{T}_{k} \leq J} \int_{I \wedge \mathcal{T}_{k}}^{I \wedge \mathcal{T}_{k+1}} \tilde{\mathcal{A}}_N F_{k}(\bar{\mathbf{L}}_t) \dd t}. 
\end{align}
\end{linenomath}
Noting that $\tilde{\mathcal{A}}_NF_{k}$ is non-negative and the integrand on the left is monotone in $I$, by monotone convergence in $I$, and Fatou's lemma we have 
\begin{linenomath}
    \begin{align}
        \E[N, \boldsymbol{\pi}^{(N)}]{F_{k}(\bar{\mathbf{L}}_{\mathcal{T}_{k+1}})} & \geq \E[N, \boldsymbol{\pi}^{(N)}]{\mathbf{1}_{\mathcal{T}_{k} \leq J} \liminf_{I \to \infty} \int_{I \wedge \mathcal{T}_{k}}^{I \wedge \mathcal{T}_{k+1}} \tilde{\mathcal{A}}_N F_{k}(\bar{\mathbf{L}}_t) \dd t} \\ & = \E[N, \boldsymbol{\pi}^{(N)}]{\mathbf{1}_{\mathcal{T}_{k} \leq J} \int_{\mathcal{T}_{k}}^{\mathcal{T}_{k+1}} \tilde{\mathcal{A}}_N F_{k}(\bar{\mathbf{L}}_t) \dd t}.
    \end{align}
\end{linenomath}
Now, by applying monotone convergence in $J$, we deduce that 
\begin{equation} \label{Dynkins}
    \E[N, \boldsymbol{\pi}^{(N)}]{F_{k}(\bar{\mathbf{L}}_{\mathcal{T}_{k+1}})} \geq \E[N, \boldsymbol{\pi}^{(N)}]{\mathbf{1}_{\mathcal{T}_{k} < \infty} \int_{\mathcal{T}_{k}}^{\mathcal{T}_{k+1}} \tilde{\mathcal{A}}_N F_{k}(\bar{\mathbf{L}}_t) \dd t}.
\end{equation}

We now seek lower bounds for the quantity $\tilde{\mathcal{A}}_N F_{k}(\bar{\mathbf{L}}_t)$ appearing in~\eqref{Dynkins}, when $\mathcal{T}_{k} \leq t < \mathcal{T}_{k+1}$ (note that $\mathcal{T}_{k+1}$ might be infinite). First, note that 
\begin{equation} \label{eq:bound-negative-term}
m(x)\mathbf{1}(m(x) \geq 2^{k+1}) + m(y) \mathbf{1}(m(y) \geq 2^{k+1}) \leq (m(x)+m(y))\mathbf{1}(m(x) \text{ or } m(y) \geq 2^{k+1}).
\end{equation} 
Using~\eqref{eq:bound-negative-term}, we may bound~\eqref{eq:main-display-1} from below so that
 \begin{linenomath}
\begin{align} \label{eq:int-1}
 &  \tilde{\mathcal{A}}_NF_{k}(\bar{\mathbf{L}}^{(N)}_t) \geq 
 \frac{1}{2\mass{\boldsymbol{\pi}^{(N)}}}\int_{E\times E} \bar{\mathbf{L}}^{(N)}_t(\dd x ) \left(\bar{\mathbf{L}}^{(N)}_t - \frac 1N\delta_{x}\right)(\dd y)\bar{K}_{N}(x,y) 
\\ \nonumber & \hspace{6cm} \times
(m(x)+m(y)) \mathbf{1}(m(x)+m(y) \geq 2^{k+1} > m(x),m(y))
\\ &  \geq \frac{2^{k}}{\mass{\boldsymbol{\pi}^{(N)}}}\int_{E\times E} \bar{\mathbf{L}}^{(N)}_t(\dd x ) \left(\bar{\mathbf{L}}^{(N)}_t - \frac 1N\delta_{x}\right)(\dd y)\bar{K}_{N}(x,y) \mathbf{1}(m(x)+m(y) \geq 2^{k+1} > m(x),m(y))
\\ & \geq \frac{2^{k}}{\mass{\boldsymbol{\pi}^{(N)}}} \times \\ & \hspace{1cm}\sum_{P \in \mathscr{P}_{N}^{(k)}}  \int_{P\times P} \bar{\mathbf{L}}^{(N)}_t(\dd x ) \left(\bar{\mathbf{L}}^{(N)}_t - \frac 1N\delta_{x}\right)(\dd y)\bar{K}_{N}(x,y) \mathbf{1}(m(x)+m(y) \geq 2^{k+1} > m(x),m(y)), 
\end{align}
\end{linenomath}
where the last inequality follows from restricting the integral from the space $E\times E$ to the space $\bigcup_{P \in \mathscr{P}_{N}^{(k)}}P\times P$, with the partition $\mathscr{P}_{N}^{(k)}$ as defined in Item~\ref{item:partition_lower_bound} of Assumption~\ref{ass:gelation}. Now, using the definition of  $\cstone_{N}(P,k)$ in Item~\ref{item:partition_lower_bound} of Assumption~\ref{ass:gelation} we bound this further by 
\begin{align*} \numberthis \label{eq:part-lowered}
& \frac{2^{k}}{\mass{\boldsymbol{\pi}^{(N)}}} \times\\ & \hspace{1cm}\sum_{P \in \mathscr{P}_{N}^{(k)}} \cstone_{N}(P,k) \int_{P\times P} \bar{\mathbf{L}}^{(N)}_t(\dd x ) \left(\bar{\mathbf{L}}^{(N)}_t - \frac 1N\delta_{x}\right)(\dd y) \mathbf{1}(m(x)+m(y) \geq 2^{k+1} > m(x),m(y)).
\end{align*}
Moreover, since $P \in \mathscr{P}_{N}^{(k)}$, by Item~\ref{item:partition_lower_bound} of Assumption~\ref{ass:gelation} for $(x, y) \in P \times P$ we have $2^{k} \leq m(x), m(y) < 2^{k+1}$. This implies that the indicator $\mathbf{1}(m(x)+m(y) \geq 2^{k+1} > m(x),m(y))$ in~\eqref{eq:part-lowered} is one. Thus, we can re-write~\eqref{eq:part-lowered} as 
\begin{linenomath}
    \begin{align}
        & \frac{2^{k}}{\mass{\boldsymbol{\pi}^{(N)}}} \sum_{P \in \mathscr{P}_{N}^{(k)}} \cstone_{N}(P,k)\int_{P} \bar{\mathbf{L}}^{(N)}_t(\dd x ) \left(\left( \int_{P} \bar{\mathbf{L}}^{(N)}_t(\dd x )\right) - \frac{1}{N}\right).
    \end{align}
\end{linenomath}

For brevity of notation, we now write $\left\langle f, \bar{\mathbf{L}}^{(N)}_{t} \right\rangle$ for terms of the form \[\int_{E} \bar{\mathbf{L}}^{(N)}_t(\dd x) f(x) \quad \text{ or } \quad \int_{E} \bar{\mathbf{L}}^{(N)}_t(\dd y) f(y).\]  Then, summing over the possible values $m(x), m(y) \in [2^{k}, 2^{k+1})$, we deduce the lower bound
 \begin{linenomath} 
\begin{align*} 
 & \tilde{\mathcal{A}}_NF_{k}(\bar{\mathbf{L}}^{(N)}_t) \geq \frac{2^{k}}{\mass{\boldsymbol{\pi}^{(N)}}} \sum_{P \in \mathscr{P}_{N}^{(k)}} \sum_{n_{1},n_{2} = 2^{k}}^{2^{k+1} -1} \cstone_{N}(P,k)
\left\langle \mathbf{1}_{n_{1} \leq m(x) < n_{1}+1, x\in P}, \bar{\mathbf{L}}^{(N)}_{t} \right\rangle 
\\  &\hspace{8cm} \times
\left(\left\langle \mathbf{1}_{n_{2} \leq m(x) < n_{2}+1, x\in P}, \bar{\mathbf{L}}^{(N)}_{t} \right\rangle - \frac 1N\mathbf{1}_{\{n_{1}=n_{2}\}}\right)
\\ \numberthis \label{eq:disp-2} & =\frac{2^{k}}{\mass{\boldsymbol{\pi}^{(N)}}} 
\\& \times \frac 1{N^2} \sum_{P \in \mathscr{P}_{N}^{(k)}} \cstone_N(P,k)\Bigg(\Big(N\sum_{n=2^{k}}^{2^{k+1}-1} \left\langle \mathbf{1}_{n \leq m(x) < n+1, x\in P}, \bar{\mathbf{L}}^{(N)}_{t} \right\rangle\Big)^2 - N\sum_{n=2^{k}}^{2^{k+1}-1}\left\langle \mathbf{1}_{n \leq m(x) < n+1, x\in P}, \bar{\mathbf{L}}^{(N)}_{t} \right\rangle \Bigg).
\end{align*}
\end{linenomath} 
We now wish to apply Lemma~\ref{lem:easy-2} to the sum indexed by the elements $P$ of $\mathscr{P}_{N}^{(k)}$ in the above display, where the integer valued random variables $\sum_{n=2^{k}}^{2^{k+1}-1} N\left\langle \mathbf{1}_{n \leq m(x) < n+1, x\in P}, \bar{\mathbf{L}}^{(N)}_{t} \right\rangle$ play the role of $v_{h}$ in the lemma. 
These integers count the number of clusters at time $t$ in each element $P$ of the partition $\mathscr{P}^{(k)}_N$.
Since $|\mathscr{P}_{N}^{(k)}|\leq \xi(N)$, $\xi(N)$ plays the role of $n$ appearing in Lemma~\ref{lem:easy-2}.

Note, that, if $\mathcal{T}_{k} \leq t < \mathcal{T}_{k+1}$, by the definition in~\eqref{stopT}, we have 
\begin{equation}\label{ineq_F}
\left\langle m \mathbf{1}_{m \geq 2^{k}}, \bar{\mathbf{L}}^{(N)}_{t} \right\rangle \geq \rho_{k} \mass{\boldsymbol{\pi}^{(N)}} \quad \text{ and } 
\left\langle m \mathbf{1}_{m \geq 2^{k+1}}, \bar{\mathbf{L}}^{(N)}_{t} \right\rangle < \rho_{k+1} \mass{\boldsymbol{\pi}^{(N)}}, 
\end{equation}
so that
\begin{equation} \label{eq:between-stops-1}
 \left\langle m \mathbf{1}_{2^{k} \leq m < 2^{k+1}}, \bar{\mathbf{L}}^{(N)}_{t} \right\rangle
 \stackrel{\eqref{eq:rho_k}}{\geq} f_{k} \mass{\boldsymbol{\pi}^{(N)}}.  
 \end{equation}
Therefore, we choose $\kappa_{1}$ in Lemma~\ref{lem:easy-2} such that 
\begin{align*}
\sum_{h=1}^{n} v_{h} = \sum_{P\in\mathscr{P}_{N}^{(k)}} \sum_{n=2^{k}}^{2^{k+1}-1} N\left\langle \mathbf{1}_{n \leq m(x) < n+1, x\in P}, \bar{\mathbf{L}}^{(N)}_{t} \right\rangle 
& =  N \left\langle \mathbf{1}_{2^{k} \leq m < 2^{k+1}}, \bar{\mathbf{L}}^{(N)}_{t} \right\rangle \\ & \stackrel{\eqref{eq:between-stops-1}}{\geq} N \frac{f_{k} \mass{\boldsymbol{\pi}^{(N)}}}{2^{k+1}}  =: \kappa_1
\end{align*}

For the assumptions of Lemma~\ref{lem:easy-2} to hold, we  further require
\begin{equation}\label{appl_lemma}
\kappa_{1} = N \frac{f_{k} \mass{\boldsymbol{\pi}^{(N)}}}{2^{k+1}}
>\xi(N).
\end{equation}
This follows from the fact that 
\begin{linenomath}
 \begin{align}
\kappa_1 =  N \frac{f_{k} \mass{\boldsymbol{\pi}^{(N)}}}{2^{k+1}} & \geq \frac{(\min_{0\leq \ell \leq \left \lceil \log_{2}(\psi'(N)) \right \rceil - 1}f_{\ell}) N\mass{\boldsymbol{\pi}^{(N)}}}{\psi'(N)} \\ & \stackrel{\eqref{eq:ass_altern1}}{\geq} 2\xi(N) > \xi(N)
\label{ineq_lemma2}
 \end{align}
 \end{linenomath}
 where the first inequality in \eqref{ineq_lemma2} is obtained by inserting the value $k = \log_2(\psi'(N)) - 1$ in the denominator.
Therefore, applying Lemma~\ref{lem:easy-2} to the sum in  \eqref{eq:disp-2}, with $\kappa_{2} := \sum_{P \in \mathscr{P}_{N}^{(k)}} \cstone_N(P,k)^{-1}$ we obtain the lower bound:
\begin{align}\label{eq:generator_bound} & \tilde{\mathcal{A}}_NF_{k}(\bar{\mathbf{L}}^{(N)}_t) \\ & \hspace{0.5cm} \geq \frac{2^{k}}{\mass{\boldsymbol{\pi}^{(N)}} N^2} \times \frac{(\kappa_1 - n)^2}{2\kappa_2} = \frac{2^{k-1}\left(\mass{\boldsymbol{\pi}^{(N)}}\frac{f_{k}}{2^{k+1}} -
\frac{\xi(N)}N \right)^2 }{\mass{\boldsymbol{\pi}^{(N)}} \sum_{P \in \mathscr{P}_{N}^{(k)}} \cstone_N(P,k)^{-1}}, \quad \forall \,t\in [\mathcal{T}_{k}, \mathcal{T}_{k+1}). 
\end{align}
Since ~\eqref{ineq_lemma2} implies that $\frac{\xi(N)}N \leq  \frac 12 \mass{\boldsymbol{\pi}^{(N)}}\frac{f_{k}}{2^{k+1}}$, 
we finally deduce the bound
\[
\tilde{\mathcal{A}}_NF_{k}(\bar{\mathbf{L}}^{(N)}_t) \geq \frac{f_{k}^2 \mass{\boldsymbol{\pi}^{(N)}}}{2^{k+5}(\sum_{P \in \mathscr{P}_{N}^{(k)}} \cstone_N(P,k)^{-1})}, \quad \text{for all $k\leq \log_2(\psi'(N)) -1$ and $t\in [\mathcal{T}_{k}, \mathcal{T}_{k+1})$.}
\] 
Now, recalling that $0\leq F_{k}(\bar{\mathbf{L}}^{(N)}_t)\leq 1$,
by applying~\eqref{Dynkins} 
we have 
\begin{linenomath}
\begin{align*}
1 & \geq \E[N, \boldsymbol{\pi}^{(N)}]{F_{k}(\bar{\mathbf{L}}^{(N)}_{\mathcal{T}_{k+1}})} \\ & \geq \E[N, \boldsymbol{\pi}^{(N)}]{\mathbf{1}_{\mathcal{T}_{k} < \infty} \int_{\mathcal{T}_{k}}^{\mathcal{T}_{k+1}}\tilde{\mathcal{A}}F_{k}(\bar{\mathbf{L}}^{(N)}_t) \dd t}  \geq \frac{f_{k}^2\mass{\boldsymbol{\pi}^{(N)}}}{2^{k+5}(\sum_{P \in \mathscr{P}_{N}^{(k)}} \cstone_N(P,k)^{-1})}\E[N, \boldsymbol{\pi}^{(N)}] {\mathbf{1}_{\mathcal{T}_{k} < \infty}(\mathcal{T}_{k+1}-\mathcal{T}_{k})}
\end{align*}
\end{linenomath}
which in turn yields
\begin{equation} \label{eq:lacunary-bounded-initial-stop}
\E[N, \boldsymbol{\pi}^{(N)}] {\mathbf{1}_{\mathcal{T}_{k} < \infty}(\mathcal{T}_{k+1}-\mathcal{T}_{k})}\leq \frac{2^{k+5}(\sum_{P \in \mathscr{P}_{N}^{(k)}} \cstone_N(P,k)^{-1})}{\mass{\boldsymbol{\pi}^{(N)}} f_{k}^2}. 
\end{equation}
Now, by assumption in Equation~\eqref{eq:initial-condition}, we have $\mathcal{T}_{0} = 0 < \infty$ almost surely. Thus, by induction and the bound in~\eqref{eq:lacunary-bounded-initial-stop}, for all $k \leq \lceil\log_2(\psi'(N))\rceil - 1$, we have $\E[N, \boldsymbol{\pi}^{(N)}]{\mathcal{T}_{k}}<\infty$ and hence
$\mathcal{T}_{k} < \infty$ almost surely. Therefore, for all $k \leq \lceil\log_2(\psi'(N))\rceil - 1$ we may re-write~\eqref{eq:lacunary-bounded-initial-stop} as 
\begin{equation} \label{eq:lacunary}
\E[N, \boldsymbol{\pi}^{(N)}] {\mathcal{T}_{k+1}-\mathcal{T}_{k}}\leq \frac{2^{k+5}(\sum_{P \in \mathscr{P}_{N}^{(k)}} \cstone_N(P,k)^{-1})}{\mass{\boldsymbol{\pi}^{(N)}} f_{k}^2}. 
\end{equation}
Then, by summing~\eqref{eq:lacunary}, we deduce that
\begin{linenomath}
    \begin{align*} 
        \E[N, \boldsymbol{\pi}^{(N)}]{\mathcal{T}_{\lceil\log_{2}(\psi'(N))\rceil}} &=  \sum_{k=0}^{\lceil\log_2(\psi'(N))\rceil-1}\E[N, \boldsymbol{\pi}^{(N)}] {(\mathcal{T}_{k+1}-\mathcal{T}_{k})} \\ \numberthis \label{eq:bound-sum} & \leq \frac{1}{\mass{\boldsymbol{\pi}^{(N)}}} \sum_{k=0}^{\lceil\log_2(\psi'(N))\rceil -1}\frac{2^{k+5}(\sum_{P \in \mathscr{P}_{N}^{(k)}} \cstone_N(P,k)^{-1})}{f_{k}^2}. 
    \end{align*}
\end{linenomath}
Now, recall that according to the definition in~\eqref{eq:rho_k} and just above it, $\delta = \rho \eps$ and $\rho_{k}>\rho$ for all $k$. Therefore we have  
\begin{linenomath}
\begin{align*}
\mathcal{T}_{\lceil\log_{2}(\psi'(N))\rceil} & = \inf\left\{t > 0: \left\langle m \mathbf{1}_{m \geq 2^i}, \bar{\mathbf{L}}^{(N)}_{t} \right\rangle /\mass{\boldsymbol{\pi}^{(N)}} \geq \rho_{i} \text{ for all } i= 0, 1, \ldots, \lceil\log_{2}(\psi'(N))\rceil\right\}
\\ & \geq \inf\left\{t > 0: \left\langle m \mathbf{1}_{m \geq \psi'(N)}, \bar{\mathbf{L}}^{(N)}_{t} \right\rangle /\mass{\boldsymbol{\pi}^{(N)}} \geq \rho\right\}
\\ & \numberthis \label{eq:t-to-tau} \geq \inf\left\{t > 0: \left\langle m \mathbf{1}_{m \geq \psi'(N)}, \bar{\mathbf{L}}^{(N)}_{t} \right\rangle \geq \rho \eps \right\} \stackrel{\eqref{eq:giant-time-coagulation}}{=} \tau_{N}(\psi'(N), \delta),
\end{align*}
\end{linenomath}
almost surely. 

Combining~\eqref{eq:t-to-tau} with~\eqref{eq:bound-sum}, we deduce that 
\begin{linenomath}
\begin{align*} 
\E[N]{\tau_{N}(\psi'(N), \delta)} & \leq \E[N, \boldsymbol{\pi}^{(N)}]{\mathcal{T}_{\lceil\log_{2}(\psi'(N))\rceil}} \\ & \leq \frac{1}{\mass{\boldsymbol{\pi}^{(N)}}} \sum_{k=0}^{\lceil\log_2(\psi'(N))\rceil -1}\frac{2^{k+5}(\sum_{P \in \mathscr{P}_{N}^{(k)}} \cstone_N(P,k)^{-1})}{f_{k}^2} 
\\ \numberthis \label{eq:final-bound} & \leq \frac{1}{\eps} \sum_{k=0}^{\lceil\log_2(\psi'(N))\rceil -1}\frac{2^{k+5}(\sum_{P \in \mathscr{P}_{N}^{(k)}} \cstone_N(P,k)^{-1})}{f_{k}^2}. 
\end{align*}
\end{linenomath}
By Equation~\eqref{eq:summability}, the right-side of~\eqref{eq:final-bound} is a constant $C = C(\eps)$, independent of $\boldsymbol{\pi}^{(N)}$ and $\delta$, from which we deduce~\eqref{eq:thm-main}. In addition, the right-side just above~\eqref{eq:final-bound} tends to zero if $\mass{\boldsymbol{\pi}^{(N)}}\to\infty$, proving the last statement in Item~\ref{item:thme-simp-gel-1} of Theorem~\ref{thm:simp-gel}. 
\end{proof}    
\subsection{Proof of Item~\ref{item:thme-simp-gel-2} of Theorem~\ref{thm:simp-gel}}
\begin{proof}[Proof of Item~\ref{item:thme-simp-gel-2} of Theorem~\ref{thm:simp-gel}]
To prove Item~\ref{item:thme-simp-gel-2} of Theorem~\ref{thm:simp-gel} we apply Item~\ref{item:thme-simp-gel-1} of Theorem~\ref{thm:simp-gel} and anneal over the possible values of $\bar{\mathbf{L}}^{(N)}_{0}$. In this regard, define the event 
\[
\mathcal{E}^{(N)}(\eps, \rho_{0}) : = \left\{\mass{\bar{\mathbf{L}}^{(N)}_{0}} > \eps, \frac{\left\langle m \mathbf{1}_{m \geq 1}, \bar{\mathbf{L}}^{(N)}_{0} \right\rangle}{\mass{\bar{\mathbf{L}}^{(N)}_{0}}} \geq \rho_{0}\right\}. 
\]
 Since the bound in~\eqref{eq:thm-main} is independent of $\boldsymbol{\pi}^{(N)}$ and $N$, see~\eqref{eq:final-bound},
we have 
\[
 \E[N]{\tau_{N}(\psi'(N), \delta) \, \bigg| \,\mathcal{E}^{(N)}(\eps, \rho_{0})} \leq C. 
\]
Using the assumption from~\eqref{eq:liminf-prob-gel}, set $p_0:= \limsup \Prob[N]{\mathcal{E}^{(N)}(\eps, \rho_{0})}>0$. Then, there exists a subsequence $(N_j)_{j\in\mathbb{N}}$ such that $\lim_{j\to\infty}\Prob[N_j]{\mathcal{E}^{(N_{j})}(\eps, \rho_{0})} = p_0$. In particular,  for $N_j$ sufficiently large, \[\Prob[N_j]{\mathcal{E}^{(N_{j})}(\eps, \rho_{0})}>0.\] 
By  Markov's inequality,
\[
\Prob[N_j]{\tau_{N_{j}}(\psi'(N_j), \delta) \leq 2C \, \bigg| \, \mathcal{E}^{(N_{j})}(\eps, \rho_{0})} \geq \frac{1}{2}.
\]
We thus get
\begin{align*}
&\limsup_{N \to \infty} \Prob[N]{\tau_{N}(\psi'(N), \delta) \leq 2C}\geq 
\lim_{j\to\infty} \Prob[N_j]{\tau_{N_{j}}(\psi'(N_j), \delta) \leq 2C} \\
& \hspace{2cm} \geq\lim_{j\to\infty} \Prob[N_j]{\left\{\tau_{N_{j}}(\psi'(N_j), \delta) \leq 2C \right\} \cap \mathcal{E}^{(N_{j})}(\eps, \rho_{0})}\\
& \hspace{2cm} =\lim_{j\to\infty}\Prob[N_j]{\tau_{N_{j}}(\psi'(N_j), \delta) \leq 2C \, \bigg| \, \mathcal{E}^{(N_{j})}(\eps, \rho_{0})}\Prob[N_j]{\mathcal{E}^{(N_{j})}(\eps, \rho_{0})}\geq
\frac{p_0}{2}. 
\end{align*}

We deduce that stochastic gelation occurs according to Definition~\ref{defn:stoch-gelation} (with $T^{\psi', \delta}_g\leq 2C$). 
\end{proof}
\subsection{Proof of Corollary~\ref{cor:classical-gelation}}
We finish this section with the proof of Corollary~\ref{cor:classical-gelation}. 

\begin{proof}[Proof of Corollary~\ref{cor:classical-gelation}]
    To prove the first statement of the corollary we apply Item~\ref{item:thme-simp-gel-2} of Theorem~\ref{thm:simp-gel}. In order to do so, we need to show that Assumption~\ref{ass:gelation} is satisfied. Recall that in this setting $E=(0, \infty)$ and $m(x)=x$ for all $x\in (0, \infty)$. Therefore, in Assumption~\ref{ass:gelation}, for every $j$
    we can take trivial single-set partition of $[2^j,2^{j+1})$, so that $\xi(N) \equiv 1$. Since each of the partitions $\mathscr{P}^{(j)}_{N}$ consist of the single set $[2^j,2^{j+1})$, and we assumed $c'(j) = \inf_{x,y \in [2^{j}, 2^{j+1})} \bar{K}(x,y) > 0$, we also have, for all $N\in\mathbb{N}$
\begin{align}
&\cstone_N(P,j)\equiv \cstone_N([2^j,2^{j+1}),j)\equiv \cstone(j) >0,  \qquad \forall j\in\mathbb{N}_0, \label{cstone}
\end{align}
where $\cstone_N(P,j)$ are as defined in Item~\ref{item:partition_lower_bound} of Assumption~\ref{ass:gelation}. Now, for Item~\ref{item:condition_diagonal}, Equation~\eqref{eq:summability} reduces to showing that for some sequence $(f_{j})_{j \in \mathbb{N}_{0}} \in (0, \infty)^{\mathbb{N}_{0}}$ with $\sum_{j=0}^{\infty} f_{j} < \infty$, we have
\[
\sum_{j=0}^{\infty} \frac{2^{j}}{f_{j}^2 \cstone(j)} < \infty, 
\]
where we note that the sum may be taken to infinity because in this case the term $\phi_{N}$ is independent of $N$. However, by Remark~\ref{equiv}, this is equivalent to~\eqref{eq:class-gel-sum-cond}. Finally, for Item~\ref{item:condition_pidgeon} of Assumption~\ref{ass:gelation}, since $\xi(N) \equiv 1$, we clearly have $\lim_{N \to \infty} \frac{\xi(N)}{N} = 0$. 
This concludes the proof of the first statement of the corollary.

For the second assertion, note that in either of the two cases, we have \[\cstone(j)= \inf_{x,y \in [2^{j}, 2^{j+1})}\bar{K}(x,y) >0 \quad \text{ for all } \quad j \in \mathbb{N}_{0}.\] Hence, we only need to show that under each assumption,~\eqref{eq:class-gel-sum-cond} is satisfied. Indeed, 
\begin{enumerate}
    \item Under Item~\ref{item:no_space1}, set $\kappa^{*} := \inf_{i \in [1, 2]} \bar{K}(1,i)>0$. By the homogeneity assumption there exists $j_0 \in \mathbb{N}$ such that, for all $j \geq j_0$ we have (assuming without loss of generality $x<y$), $\bar{K}(x, y)= x^{\gamma}\bar{K}\left(1,\frac{y}{x}\right) \geq \kappa^{*} 2^{\gamma j}=:c'(j)$ whenever $2^{j} \leq x, y < 2^{j+1}$.  
    This implies that 
    \[
    \sum_{j=0}^{\infty} \left(\frac{2^{j}}{\cstone(j)} \right)^{\frac 13} = \frac{1}{(\kappa^{*})^{1/3}} \sum_{j=0}^{\infty} 2^{(1-\gamma)/3} < \infty,
    \]
    whenever $\gamma > 1$. 
    \item Under Item~\ref{item:no_space2}, by assumption  
    for all $j \geq 0$ we have $\bar{K}(x, y) \geq 1+2^{j} (j\log{2})^{3 + \epsilon}=:c'(j)$ for $ 2^{j}\leq x,y <2^{j+1}$. 
    This implies that 
    \[
    \sum_{j=0}^{\infty} \left(\frac{2^{j}}{\cstone(j)} \right)^{\frac 13} \leq 1+ \frac{1}{(\log{2})^{1 +  \epsilon/3}}\sum_{j=1}^{\infty} \frac{1}{j^{1 + \epsilon/3}} < \infty.
    \]
    \end{enumerate}
\end{proof}
\begin{example}\label{ex_final}
As mentioned in Remark~\ref{rem_cor}, in the setting of  Corollary~\ref{cor:classical-gelation}, we can quantify the sizes of the large clusters contributing to gelation. 
Indeed, the proof of Item~\ref{item:thme-simp-gel-1} of Theorem \ref{thm:simp-gel} can be retraced with the choice of  $f_k:=\frac{1}{k^{1+\epsilon}}$ (recall the assumptions introduced around Equation~\eqref{eq:rho_k}). In particular condition~\eqref{eq:ass_altern1} reduces to check whether 
\[\frac{N}{4}\geq \psi'(N) (\lceil \log_2(\psi'(N))\rceil)^{1+\epsilon} \qquad \forall N \in \mathbb{N}
\]
(recall that $\xi(N)\equiv 1$).
This can be achieved by taking for instance any $\psi'(N)\leq N^{b}$ for any $b\in(0,1)$, and $\epsilon$ sufficiently small.  
\end{example}

{\bf Acknowledgements.}
We would like to thank Wolfgang K\"onig, and Robert Patterson for some helpful discussions. 
This research has been partially funded by the
Deutsche Forschungsgemeinschaft (DFG) (project number 443759178) through grant SPP2265 ``Random Geometric Systems'', Project P01: `Spatial Coagulation and Gelation'. The authors acknowledge the support from Dipartimento di Matematica e Informatica ``Ulisse Dini'', University of Florence.  LA acknowledges partial financial support by~``Indam-GNAMPA''~Project~CUP\_E53C22001930001 and by the Italian Ministry of University and Research (MUR) via PRIN 2022– Project Title ConStRAINeD – CUP-2022XRWY7W. We also would like to thank an anonymous reviewer for their detailed, constructive feedback, which has greatly improved the manuscript, in particular leading to a number of simplifications of Theorem~\ref{thm:simp-gel}.

%\lu{Check bibliography and unify style, if needed}
\bibliographystyle{abbrv}
\bibliography{ref}

\end{document}